\newtheorem{theorem}{Theorem}
\newtheorem{definition}[theorem]{Definition}
\newtheorem{lemma}[theorem]{Lemma}
\newtheorem{corollary}[theorem]{Corollary}
\DeclareMathOperator*{\supp}{supp}
\def\Re{\mathbb{R}}
\def\Ce{\mathbb{C}}
\def\Ne{\mathbb{N}}
\def\l{\left}
\def\r{\right}
\def\L2{L^2(\mathbb{R})}
\def\t{\text}
\def\arg{\text{arg}}
\def\Real{\text{Re}}
\def\Imag{\text{Im}}
\def\for{\text{for}}
\def\e{\text{e}}
\def\i{\text{i}}
\def\d{\text{d}}
\xapptocmd{\proof}{\mbox{}\par\nobreak}{}{}
\begin{document}

\title[Stability estimated for the truncated Fourier transform]{Estimates on the stability constant for the truncated Fourier transform}
%%%\subtitle{Untertitel / Subtitle} % if needed

\author{Mirza Karamehmedovi\'c$^\star$}
\thanks{$\star$ Department of Applied Mathematics, Technical University of Denmark, Kgs. Lyngby, Denmark (mika@dtu.dk).}

\author{Martin Sæbye Carøe$^\dagger$}

\thanks{$\dagger$ Department of Applied Mathematics, Technical University of Denmark, Kgs. Lyngby, Denmark (s194317@student.dtu.dk).}

\author{Faouzi Triki$^\ddag$}

\thanks{$\ddag$ Laboratoire Jean Kuntzmann, Université Grenoble-Alpes, Grenoble, France (Faouzi.Triki@univ-grenoble-alpes.fr).}

% Changes to be made (after meeting 25/03), and conclusions from our short meeting.
% Get a better bound for the minimal frequency of function in H, using thelargest eigenvalue for the Laplace equation (or something similar (look at the folded paper note again)).

%%% Changes to be made after meeting 25/03 %%%
% Call the space H, H^1_0 instead. (DONE)
% Simplify the expression in the theorem "small truncation". (DONE)
% Split up the theorem "small truncation" into 2: One with the general harmonic measure, and 2: A corollary where we have the concrete LB for the harmonic measure. (DONE)
% Introduce some of the stuff on harmonic measure in the article itseft. Include two proofs: The one that log|f| is subharmonic, and 2, the 2-constants-theorem. (DONE)
% Shorten the rest of the text, state most important results (such as the fact the Dirichlet problem has a unique solution). (DONE)
% Fix the latex error above the small truncation theorem, also check the claim that i make (DONE)

%%% To be done after lunch 26/03 %%%
% Correct the section on harmonic measure. (DONE)
% Change the proof of the small truncation theorem so that it's been split up in 2 (DONE)
% Add the proof on the bound for the harmonic measure in the appendix. (should I do this?)

% Mention the importance of a small stability constant, for doing numerical computations.
% Begin to think about what which numerical results should be included.
% Which numerical computations to make?

% Make a list of the needed sections: Fx introduction, then main results, then proof of the main results. Ask Faouzi if he thinks this order is the best.

\maketitle

\begin{abstract} In this paper we are interested in the inverse problem of recovering a compact supported function from
its truncated Fourier transform.  We derive new Lipschitz stability estimates for the inversion in terms of the truncation 
parameter. The obtained results show that the Lipschitz constant is of order one when the truncation parameter  is 
larger than the spatial frequency of the function, and it grows exponentially when the  truncation parameter tends to zero. 
Finally,  we present some numerical examples of reconstruction of a compactly supported function from its noisy truncated Fourier transform.
The numerical illustrations  validate our theoretical results.

\end{abstract}
\vspace{0.5cm}
{ Keywords:} Truncated Fourier transform; Stability estimates; Harmonic 
measure

\section{Introduction and  main results}
Let $F$ be the Fourier transform on the space of real valued square integrable functions on the interval $[-1,1]$, $L^2(-1,1)$.
\begin{equation*}
    (Ff)(\xi) = \widehat{f}(\xi) = \frac{1}{\sqrt{2\pi}} \int_{-1}^1 f(x)\e^{-\i x\xi }\d x, \quad \xi \in \Re.
\end{equation*}

Consider the truncated Fourier transform $F_Bf(\xi) = \chi_{[-B,B]}(\xi)  \widehat{f}(\xi)$, where $B>0$ and $\chi_A$ is the characteristic function of the subset $A\subseteq \Re$

\begin{equation*}
   \chi_A(\xi) = \begin{cases}
    1 & \xi \in A \\
    0 &  \xi \in \Re\setminus A.
\end{cases}
\end{equation*}

We will consider the stability of the inverse problem of recovering $f$ from the knowledge of $F_Bf$. The problem has many applications
in many scientific areas such as medical imaging,  radar imaging, and  is related to inverse scattering  \cite{bao2010multi, bao2011numerical, marechal2024regularization, natterer2001mathematics}. 

It is well known, see \cite[Section 7.1.14]{hörmander} \cite{bao2010multi}, that the Fourier transform of a compactly supported function, such as $f\in L^2(-1,1)$, has a holomorphic extension to $\Ce$. It follows that if $\widehat{f}(\xi) = 0$ for all $\xi \in [-B,B]$, then $\widehat{f}(\xi) = 0$ for all $\xi \in \Ce$. Thus $F_Bf\equiv 0$ implies that $\widehat{f}\equiv 0$ and hence $f \equiv 0$, which shows that the problem of recovering $f$ from $F_Bf$ has a unique solution.

The problem of recovering $f$ is however not stable as the singular values of $F_B$ decay exponentially, see \cite[Theorem 3.17]{osipov} \cite{bao2010multi}.

In the paper, \cite{zbMATH06805210}, it is shown that for a function $f\in H^1(-1,1)$, we have that for any $B>0$, there exists constants $c_B, c_B' >0$ (that may depend on $B$) such that with
\begin{equation*}
    k_{B}(f)= c_B' \|f_x\|_{L^2(-1,1)}/\|f\|_{L^2(-1,1)},
\end{equation*}
we have
\begin{equation*}
    c_Bk_{B}(f)^{k_{B}(f)} \|F_Bf\|_{L^2(-B,B)}^2 \geq \|f\|_{L^2(-1,1)}^2. 
\end{equation*}

We may consider a setting where we wish to know how much of the signal we should measure, in order to obtain a given error level. Hence we are interested in the explicit dependence of the stability constant $c_Bk_B(f)^{k_B(f)}$ on the parameter $B$. In this paper, we provide this explicit dependence, under the additional assumption that the signal $f \in H^1(-1,1)$ and $f(-1)=f(1)=0$.

When $B$ is large enough, we can  easily obtain conditions under which, the problem of recovering $f$ from $F_Bf$ is stable. This is made concrete in Theorem \ref{large_truncation}.

We will consider a function $f\in L^2(-1,1)$ to be an element of $L^2(\Re)$ by extending $f$ by zero on $\Re\setminus [-1,1]$. Similarly, we can extend a $H^1(-1,1)$ function $f$ by zero on $\Re \setminus (-1,1)$. It is clear that extension of $f$ is in $H^1(\Re)$ if and only if $f(-1) =  f(1) = 0$.

Consider the  Sobolev space 
\begin{equation*}
\begin{split}
H_0^1(-1,1) &= \{f\in H^1(\Re)\,:\, \supp(f) \subseteq (-1,1)\} \\&= \{f\in H^1(-1,1)\,:\, f(-1)=f(1)=0\}.
\end{split}
\end{equation*}

\begin{comment}
equipped with the norm 
\begin{equation*}
    \|f\|_{H_0^1[-1,1]}^2 = \|f\|_{L^2[-1,1]}^2+\|f_x\|_{L^2[-1,1]}^2=\int_{-1}^1 (|f(x)|^2 + |f_x(x)|^2)\d x. 
\end{equation*}
\end{comment}

Define the frequency number of a function $ f\in H_0^1(0, 1)$ by 
\begin{equation*}
\omega(f) = \|f_x\|_{L^2(-1,1)}/\|f\|_{L^2(-1,1)}.
\end{equation*}

%$f\in H^1(\Re)$, $f_1\in H^{1/2}(\Re)$, $f_1(x) = 0$ for $x\in (-1,1)$.
%\begin{equation*}
   % f(x) = f_0(x) + ax+b
%\end{equation*}

%\begin{equation*}
 %   f(x)\phi(x) = f_0(x)
%\end{equation*}

For functions in $H_0^1(-1,1)$, the frequency number is in fact always bounded from below. 

It is well known that the smallest eigenvalue to the Laplace problem
\begin{equation*} \label{laplace}
\begin{split}
    &-f_{xx}(x) = \lambda f(x)  \quad x\in (-1,1)\\
    &f(-1) =f(1)=0
    \end{split}
\end{equation*}
is given by
\begin{equation*}
    \lambda_{\text{min}} = \min_{\substack{f\in H_0^1[-1,1] \\ \|f\|_{L^2} = 1}} \l\{\int_{-1}^1 (f_x)^2 \d x\r\} = \min_{f\in H_0^1[-1,1]}\{\omega(f)^2\},
\end{equation*}
see \cite{evans}, chapter 6.4. Using standard methods, we find that the lowest eigenvalue for \eqref{laplace} is the smallest non-zero solution to the equation $\sin(2\sqrt{\lambda})=0$, that is $\lambda_{\t{min}} = \pi^2/4$. Hence 

\begin{equation} \label{frequencylowerbound}
\omega(f)\geq \pi/2.
\end{equation}
The concept of the frequency number has been successfully  used  recently to quantify the weak observability for evolution
systems \cite{ammari2020weak}, and  unique continuation of finite sums of eigenfunctions
of elliptic equations in divergence form \cite{osses2023improved}.

% For comparison, the wizard hat function, that is the piecewise affine function that consists of 2 line pieces and is equal to A at x=0, has frequency number =2, so this is close to being minimal frequency. The function A*sin((x+1)pi/2) has minimal frequency. This function is a rounded version of the wizard hat function.

\begin{theorem}[Large Truncation] \label{large_truncation}
    Suppose $f\in H_0^1(-1,1)$. Then 
    \begin{equation} \label{general_large_truncation}
         \|f\|_{L^2(-1,1)} \leq  \l(1-\frac{\omega(f)^2}{B^2}\r)^{-1/2} \|F_Bf\|_{L^2(-B,B)} .
    \end{equation}
    In particular, if $B\geq \sqrt{\gamma}\omega(f)$ for $\gamma>1$, then
    \begin{equation} \label{particular_large_truncation}
        \|f\|_{L^2(-1,1)} \leq C_\gamma \|F_Bf\|_{L^2(-B,B)},
    \end{equation}
    with $C_\gamma = \l(1-\frac{1}{\gamma}\r)^{-1/2}$.
\end{theorem}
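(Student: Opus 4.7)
The plan is to use Plancherel's theorem to pass to the frequency side, where the truncation becomes a plain restriction of the domain of integration, and then control the tail using the first derivative $f_x$. The hypothesis $f \in H_0^1(-1,1)$ is exactly what is needed so that the zero-extension of $f$ to $\mathbb{R}$ belongs to $H^1(\mathbb{R})$, as noted in the introduction; this legitimizes taking the Fourier transform on all of $\mathbb{R}$ and identifying $\widehat{f_x}(\xi) = i\xi \widehat{f}(\xi)$.

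Concretely, I would first split
\begin{equation*}
\|f\|_{L^2(-1,1)}^2 = \|\widehat{f}\|_{L^2(\mathbb{R})}^2 = \|F_Bf\|_{L^2(-B,B)}^2 + \int_{|\xi|>B}|\widehat{f}(\xi)|^2\, d\xi
\end{equation*}
using Plancherel's identity on the zero-extension. For the high-frequency tail, the elementary bound $1 \leq \xi^2/B^2$ for $|\xi|>B$ gives
\begin{equation*}
\int_{|\xi|>B}|\widehat{f}(\xi)|^2 \, d\xi \leq \frac{1}{B^2}\int_{\mathbb{R}}\xi^2|\widehat{f}(\xi)|^2\, d\xi = \frac{1}{B^2}\|f_x\|_{L^2(-1,1)}^2,
\end{equation*}
where the final equality is again Plancherel applied to $f_x$ (taking into account that the zero-extension of $f$ is in $H^1(\mathbb{R})$). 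Using the definition $\omega(f)^2 = \|f_x\|_{L^2}^2/\|f\|_{L^2}^2$, this tail is bounded by $(\omega(f)^2/B^2)\|f\|_{L^2(-1,1)}^2$.

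Substituting back and isolating $\|f\|_{L^2(-1,1)}^2$ yields
\begin{equation*}
\left(1-\frac{\omega(f)^2}{B^2}\right)\|f\|_{L^2(-1,1)}^2 \leq \|F_Bf\|_{L^2(-B,B)}^2,
\end{equation*}
which is \eqref{general_large_truncation} after taking square roots; note the factor on the left is positive precisely when $B>\omega(f)$, which is the implicit regime of interest. The particular estimate \eqref{particular_large_truncation} then follows by direct substitution of $B^2 \geq \gamma\,\omega(f)^2$, giving the constant $C_\gamma = (1-1/\gamma)^{-1/2}$.

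There is no real obstacle here, only a bookkeeping point: one must remember that $H_0^1$ (as opposed to $H^1$) is needed so that $f_x$ on $(-1,1)$ and the distributional derivative of the extended $f$ on $\mathbb{R}$ coincide with no boundary Dirac contributions, so that Plancherel applied to $f_x$ really recovers $\|\xi \widehat{f}\|_{L^2(\mathbb{R})}$.
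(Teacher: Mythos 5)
Your proposal is correct and follows essentially the same route as the paper's own proof: Plancherel/Parseval on the zero-extension, the tail bound $1\leq \xi^2/B^2$ for $|\xi|>B$, the identity $\|\xi\widehat{f}\|_{L^2(\Re)}=\|f_x\|_{L^2(-1,1)}$ (valid because $f\in H_0^1$), and isolating $\|f\|_{L^2(-1,1)}^2$. Your closing remark about the boundary values of $f$ being needed to avoid Dirac contributions in the distributional derivative is precisely the caveat the paper records at the end of its proof, and your observation that the estimate is only meaningful for $B>\omega(f)$ is a correct (and worthwhile) explicit note.
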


\begin{proof}
    Using Parseval's identity, we have that
    \begin{equation*} 
    \begin{split}
        \|f\|_{L^2[-1,1]}^2 &= \int_{-B}^B |\widehat{f}(\xi)|^2\d \xi + \int_{|\xi|\geq B} |\widehat{f}(\xi)|^2 \\
        &\leq \|F_Bf\|_{L^2(-B,B)}^2 + \frac{1}{B^2} \int_{|\xi|\geq B}  |\widehat{f}(\xi)|^2\xi^2\d \xi \\
        &\leq \|F_Bf\|_{L^2(-B,B)}^2 + \frac{\|f_x\|_{L^2(-1,1)}}{B^2} \\
        &=\|F_Bf\|_{L^2(-B,B)}^2 + \frac{\omega(f)^2}{B^2} \|f\|_{L^2(-1,1)}^2
        \end{split}
    \end{equation*}
from which we obtain \eqref{general_large_truncation}. From \eqref{general_large_truncation} we immediately obtain \eqref{particular_large_truncation}. Notice that for any $f\in H^1(-1,1)$, we needed the additional assumption that $f(-1)=f(1)=0$ in order for the following equality to hold
\begin{equation*}
    \int_{-1}^1 |f_x(x)|^2 = \int_{-\infty}^\infty \xi^2 |\widehat{f}(\xi)|^2\d \xi.
\end{equation*}
\end{proof}

The stability estimate  in Theorem \ref{large_truncation} shows that the inverse problem is well posed if the truncation parameter is large enough and covers the spatial frequency of the function. Our objective 
now is to derive a stability estimate for the inversion when the truncation parameter is smaller than the spatial frequency of the function.\\

The rest of the paper is organized as follows. In section 2, we recall the notion of harmonic measure of an open domain in the complex plane, and the Two constant Theorem. Section 3 is devoted to the derivation of a Gagliardo-Nirenberg inequality with explicit constants. The main results of the paper are provided in section 4. Theorem \ref{desired_stability} exhibits a Lipschitz stability estimate for the inverse problem with a Lipschitz constant that blows up when the truncation parameter tends to zero. Corollary \ref{eta_bound} shows that
blow up rate is of  exponential type with  explicit constants. Finally, some numerical examples of reconstruction of a compactly supported function from its noisy truncated Fourier transform are presented in section 5.

\section{Harmonic measures}
We will now introduce the notion of a harmonic measure \cite{nevanlinna1970analytic}. A real valued function, $u$ defined on $U\subset \Ce$ where $U$ is open and connected, is said to be harmonic if $u\in C^2(U)$ and $\Delta u= u_{xx} + u_{yy} \equiv 0$. Moreover, a real valued function, $u$ is said to be subharmonic on $U$, if $u$ is upper semi-continuous on $U$, that is if
\begin{equation*}
    \limsup_{z\to \xi} u(z) \leq u(\xi) \quad \text{for all} \quad \xi \in U,
\end{equation*}
and if the following inequality is satisfied for all $z\in U$
    \begin{equation} \label{mean_value_property}
            u(z) \leq \frac{1}{2\pi} \int_{0}^{2\pi} u(z+re^{i\theta}) \d \theta, \quad \text{for all} \,\,r>0\,\, \text{such that} \,\, B_r(z)\subset U.
    \end{equation}
Harmonic functions satisfy \eqref{mean_value_property} with equality.\\
%The property of being harmonic on a domain can be preserved using conformal mappings in the following way.
%\begin{theorem} \label{harmonic_conformal}
 %   Let $U_1,U_2\subset \Ce$ be open and connected. Suppose that $f:\, U_1\to U_2$ is a conformal mapping, that is $f$ is holomorphic and bijective.
  %  Now suppose $u$ is a Harmonic function $U_1$. Then $u\circ f$ is harmonic on $U_2$.
%\end{theorem}
%The proof of is simple and  can be found in the book \cite{ransford}, along with many of the other results in this section.

The Dirichlet problem is the problem of finding a harmonic function $u$ in $U$ such that for a function $\phi: \partial U\to \mathbb{R}$ we have $\lim_{z\to \xi} u(z) = \phi(\xi)$ for all $\xi \in \partial U$. It is important to note here, that if $U$ is unbounded, then $\infty \in \partial U$, and 
the value of $\phi$ at $\infty$ should be specified. Under rather general assumptions on $U$ and $\phi$, the Dirichlet problem has a unique solution. 

\begin{theorem}[Solution to the Dirichlet problem, \cite{ransford} Corollary 4.2.6] \label{dirichlet_solution}
    Let $U\subset \Ce$ be open and simply connected, and let $\phi:\, \partial U \to \Re$ be bounded and continuous on $\partial U$ except possibly at finitely many points, $E = \{\zeta_1, \hdots \zeta_n\}$. Then there exists a unique harmonic function, $u$ on $U$ such that for all $\zeta \in \partial U \setminus E$, $\lim_{z\to \zeta} u(z) = \phi(\zeta)$. This function is given by
    \begin{equation*}
        u=\sup_{v\in \mathcal{V}} v,
    \end{equation*}
    where $\mathcal{V}$ is the family of sub-harmonic functions that satisfy
    \begin{equation*}
        \limsup_{z\to \zeta} u(z) \leq \phi(\zeta) \quad \text{for all} \quad \zeta \in \partial U
    \end{equation*}
\end{theorem}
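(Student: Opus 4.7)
The plan is to prove the theorem by the classical Perron method, with additional care at the exceptional set $E$. Let me sketch the main steps.

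First, I would verify that the Perron family $\mathcal{V}$ is nonempty and that the candidate $u=\sup_{v\in\mathcal{V}}v$ is well-defined and bounded. Since $\phi$ is bounded, say $|\phi|\le M$, the constant function $-M$ lies in $\mathcal{V}$, so $\mathcal{V}\neq\emptyset$. An application of the maximum principle for subharmonic functions (combined with the boundary estimate $\limsup_{z\to\zeta}v(z)\le\phi(\zeta)\le M$ at every $\zeta\in\partial U\setminus E$, together with a standard argument excluding the finitely many exceptional points using boundedness and removable singularities) yields $v\le M$ throughout $U$ for every $v\in\mathcal{V}$, so $u\le M$ pointwise.

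Next I would show that $u$ is harmonic in $U$. This is the standard Poisson-modification argument: given a disk $\overline{B_r(z_0)}\subset U$ and any $v\in\mathcal{V}$, form the harmonic replacement $\tilde v$ on $B_r(z_0)$ (the Poisson integral of $v|_{\partial B_r(z_0)}$, extended by $v$ outside the disk). Then $\tilde v\ge v$ and $\tilde v\in\mathcal{V}$, which upgrades the supremum to one taken over harmonic-on-the-disk functions. Applying this at a sequence $v_n\in\mathcal{V}$ with $v_n(z_0)\to u(z_0)$ and using Harnack-type compactness of bounded harmonic functions on $B_r(z_0)$ gives that $u$ coincides on the disk with a harmonic function. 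Since $z_0$ was arbitrary, $u$ is harmonic on $U$.

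The main obstacle, and the heart of the argument, is the boundary behavior $\lim_{z\to\zeta}u(z)=\phi(\zeta)$ at every $\zeta\in\partial U\setminus E$. The standard strategy is to construct a \emph{barrier} at each such point: a local subharmonic function $b$ with $b<0$ in $U$ near $\zeta$ and $\lim_{z\to\zeta}b(z)=0$. Because $U$ is simply connected and (implicitly) not all of $\Ce$, the Riemann mapping theorem or an elementary construction using a branch of $\log(z-\zeta)$ (available thanks to simple connectedness of $\Ce\setminus\overline U$ in the Riemann sphere, after suitable reduction) provides such a barrier at every finite boundary point; the point at infinity, if relevant, is handled by an inversion. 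With a barrier in hand, the classical two-sided squeeze shows $\limsup_{z\to\zeta}u(z)\le\phi(\zeta)$ and $\liminf_{z\to\zeta}u(z)\ge\phi(\zeta)$, using subharmonic minorants of the form $\phi(\zeta)-\varepsilon+Cb(z)$ and analogous superharmonic majorants built from $-v$ with $v\in\mathcal{V}$.

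Finally, uniqueness follows from the maximum principle: if $u_1,u_2$ are two bounded harmonic solutions agreeing on $\partial U\setminus E$, then $u_1-u_2$ is bounded and harmonic on $U$ with vanishing boundary values off the finite set $E$; isolated singularities of bounded harmonic functions are removable, so $u_1-u_2$ extends harmonically across $E$ with zero boundary values, hence is identically zero. The delicate step throughout is ensuring that the finitely many exceptional points do not spoil either the existence argument (via the maximum principle on a punctured domain) or the uniqueness argument, both of which are handled by the removable-singularity lemma for bounded harmonic functions.
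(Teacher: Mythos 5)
You should first note what you are comparing against: the paper does not prove this theorem at all --- it is quoted from Ransford \cite{ransford} (Corollary 4.2.6), with only the remark that many proofs exist. Your proposal is, in essence, the canonical Perron-method argument underlying that citation, and most of it is sound. The family $\mathcal{V}$ is nonempty and bounded above by $\sup|\phi|$ (here you do not even need special care at $E$, since the defining condition $\limsup_{z\to\zeta}v\leq\phi(\zeta)$ is imposed at \emph{every} $\zeta\in\partial U$, and $\phi$ is bounded there); harmonicity of $u$ by Poisson modification is standard; and the barrier argument at points of $\partial U\setminus E$ works because for a simply connected $U\subsetneq\Ce$ the complement in the Riemann sphere is a continuum containing more than one point, so every boundary point is regular by Lebesgue's criterion. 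Two points deserve explicit care that you gloss over: when $U$ is unbounded, $\infty$ must be treated as a boundary point (this is exactly the situation in which the paper applies the theorem, to the unbounded domain $G_{L,B}$), and in the squeeze $\phi(\zeta)-\varepsilon+Cb(z)\in\mathcal{V}$ one must choose $C$ large enough that the condition also holds at the finitely many discontinuity points of $\phi$; this works precisely because $E$ is finite, hence at positive distance from $\zeta$.

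There is, however, one genuine flaw: your uniqueness step. You argue that ``isolated singularities of bounded harmonic functions are removable, so $u_1-u_2$ extends harmonically across $E$.'' But the points of $E$ are \emph{boundary} points of $U$, and in general they are not isolated points of $\Ce\setminus U$ --- think of a discontinuity point of $\phi$ in the interior of a boundary arc, or a corner of the slit $[0,B]\times\{0\}$ in the paper's application --- so $U$ contains no punctured neighbourhood of $\zeta_j$ and the removable-singularity lemma does not even apply. The correct tool is the extended maximum principle for bounded subharmonic functions with a finite (more generally, polar) exceptional boundary set: writing $h=u_1-u_2$ with $|h|\leq C$, and assuming first that $U$ is bounded with diameter at most $R$, consider for $\varepsilon>0$ the function $h_\varepsilon(z)=h(z)-\varepsilon\sum_{j}\log\bigl(R/|z-\zeta_j|\bigr)$, which is subharmonic on $U$ and satisfies $\limsup_{z\to\zeta}h_\varepsilon(z)\leq 0$ at \emph{every} $\zeta\in\partial U$, since the logarithmic terms tend to $-\infty$ at each $\zeta_j$ and dominate the bounded $h$; the maximum principle and $\varepsilon\to 0$ give $h\leq 0$, and symmetry gives $h\geq 0$ (the unbounded case follows by inversion, treating $\infty$ as one more exceptional point). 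Relatedly, you silently --- and correctly --- restrict uniqueness to \emph{bounded} harmonic functions; without that restriction the statement is false, since one can add a Poisson-kernel-type harmonic function with pole at a point of $E$ that has boundary limit $0$ on $\partial U\setminus E$. The paper's statement omits the word ``bounded,'' so your reading is actually the accurate rendering of Ransford's result, but the uniqueness mechanism must be the exceptional-set maximum principle just described, not interior removability.
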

The theorem was rigorously proved for rather arbitrary domains by David Hilbert in 1900. Today, many different proofs exists, see for example \cite{ransford}.

\begin{definition}
Let $A$ be a Borel subset of $\partial U$ with $U$ being an open and simply-connected subset of $\Ce$. The harmonic measure of $A$ in $U$ denoted 
by $w_U(\cdot, A)$, is defined as the solution to the Dirichlet problem in $U$ with the boundary data $\phi(z) = \chi_A(z)$.
\end{definition}

\begin{theorem}[Two constants theorem] \label{two_constants_theorem}
    Suppose $u: U\subset \mathbb{C}\to \mathbb{R}$ is subharmonic on $U\subset \Ce$ where $U$ is open and simply connected. Let $A$ be a Borel subset of $\partial U$. Suppose moreover that
    \begin{equation*}
        u(z)\leq M \quad \t{for all} \quad z\in U \quad  \t{and} \quad  \limsup_{z\to \xi} u(z) \leq m \quad \t{for all} \quad \xi \in A,
    \end{equation*}
    then
    \begin{equation*}
        u(z) \leq m  w_U(z,A) + M (1-w_U(z,A)).
    \end{equation*}
\end{theorem}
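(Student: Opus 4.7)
The plan is to build a single auxiliary subharmonic function whose boundary behavior controls the desired inequality, then invoke the maximum principle. Concretely, I would set
\begin{equation*}
    v(z) = u(z) - m\, w_U(z,A) - M\bigl(1 - w_U(z,A)\bigr), \qquad z \in U.
\end{equation*}
Since $w_U(\cdot,A)$ is harmonic on $U$ (being the Dirichlet solution from Theorem \ref{dirichlet_solution}), the combination $m\, w_U(\cdot,A) + M(1-w_U(\cdot,A))$ is harmonic, and hence $v$ is subharmonic on $U$ as the sum of a subharmonic function and an harmonic one. This reduces everything to showing $v \leq 0$ on $U$.

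Next I would check the boundary behavior of $v$ on the two pieces of $\partial U$. For $\xi \in A$ we have $\limsup_{z\to\xi} u(z) \leq m$ by hypothesis, and $w_U(z,A) \to 1$ as $z \to \xi$ by definition of the harmonic measure as the Dirichlet solution with boundary data $\chi_A$; this yields $\limsup_{z\to\xi} v(z) \leq m - m \cdot 1 - M \cdot 0 = 0$. For $\xi \in \partial U \setminus A$ we use only the global bound $u(z) \leq M$ and the boundary value $w_U(z,A) \to 0$, which gives $\limsup_{z\to\xi} v(z) \leq M - m \cdot 0 - M \cdot 1 = 0$.

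Finally I would invoke the maximum principle for subharmonic functions: a subharmonic function $v$ on a simply connected open set $U \subset \mathbb{C}$ which satisfies $\limsup_{z\to\xi} v(z) \leq 0$ for every $\xi \in \partial U$ must satisfy $v \leq 0$ throughout $U$. Rearranging $v(z) \leq 0$ yields exactly the conclusion
\begin{equation*}
    u(z) \leq m\, w_U(z,A) + M\bigl(1-w_U(z,A)\bigr).
\end{equation*}

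The main subtlety I would need to address is the application of the maximum principle at the exceptional points. The boundary data $\chi_A$ is discontinuous precisely on the topological boundary of $A$ within $\partial U$, so the relation $w_U(z,A) \to \chi_A(\xi)$ may fail on a set of exceptional points (and also at $\infty$ when $U$ is unbounded). At such points we do not control the limit of $v$ directly. The standard remedy, which I would follow, is to note that $v$ is bounded above on $U$ (since $u \leq M$ and $0 \leq w_U \leq 1$), so the exceptional points form a set that is negligible for the maximum principle; alternatively one argues by subtracting a small multiple of a suitable barrier (for example $\varepsilon \log|z - \zeta_j|^{-1}$ near each exceptional point $\zeta_j$, and $-\varepsilon \log|z|$ near $\infty$), applying the maximum principle to the perturbed subharmonic function, and letting $\varepsilon \downarrow 0$. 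Once this technicality is handled, the inequality follows.
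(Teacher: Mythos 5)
Your proof is correct, but it takes a genuinely different route from the paper's. The paper argues through the Perron construction of Theorem \ref{dirichlet_solution}: since $u$ is subharmonic, bounded above by $M$, and has boundary limsup at most $m$ on $A$, it belongs to the Perron family $\mathcal{V}_\phi$ for the boundary data $\phi = m\chi_A + M(1-\chi_A)$, so $u \leq \sup_{v\in\mathcal{V}_\phi} v$, and this envelope is then identified with $m\, w_U(\cdot,A) + M(1-w_U(\cdot,A))$ ``by linearity.'' You instead run the classical comparison argument: subtract the candidate harmonic majorant, observe the difference $v$ is subharmonic, check $\limsup_{z\to\xi} v(z)\leq 0$ on the two boundary pieces, and invoke the maximum principle. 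What each buys: your version makes explicit a technicality that the paper's proof glosses over --- the supremum over a Perron family is not literally linear, and identifying the envelope of $\phi$ with the affine combination of harmonic measures really rests on the uniqueness statement of Theorem \ref{dirichlet_solution}, which requires $\chi_A$ continuous off finitely many boundary points; conversely, your argument confronts head-on the exceptional points (the relative boundary of $A$ in $\partial U$, plus $\infty$ for unbounded $U$) where $w_U(\cdot,A)$ need not attain its boundary data, and your primary remedy --- $v$ is bounded above, and a bounded-above subharmonic function with $\limsup\leq 0$ off a polar (in particular finite) subset of a non-polar boundary is $\leq 0$, the extended maximum principle in \cite{ransford} --- is exactly right. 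Note that both proofs therefore implicitly need the exceptional set to be finite (or polar), so neither covers literally arbitrary Borel $A$, though both cover the slit-segment case actually used in Theorem \ref{small_truncation}. One small sign slip in your parenthetical barrier alternative: to handle the point at infinity you should \emph{add} $-\varepsilon\log|z|$ (after translating so that $\log|z|\geq 0$ on $U$), i.e.\ subtract $+\varepsilon\log|z|$; subtracting $-\varepsilon\log|z|$, as written, adds a term that blows up at infinity instead of tending to $-\infty$ there. Since this was offered only as an alternative to the (correct) extended maximum principle, the proof stands.
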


\begin{proof}
    Define a function $\phi: \partial U\to \mathbb{R}$ by $\phi(z) = m\chi_A(z) + M(1-\chi_A(z))$.
    
    Using Theorem \ref{dirichlet_solution} we have that $u \leq \sup_{v\in \mathcal{V}_\phi} v$, where $\mathcal{V}_{\phi}$ is the space of subharmonic functions in $U$ satisfying $\limsup_{z\in \xi} u(z) \leq \phi(\xi)$. By linearity, we have
    \begin{equation*}
    \begin{split}
        u(z) &\leq \sup_{v\in \mathcal{V}_\phi} v = m \sup_{v\in \mathcal{V}_{\chi_A}}v + M \l(1-\sup_{v\in \mathcal{V}_{\chi_A}}v\r) \\
        &= m w_U(z,A) + M (1-w_U(z,A)).
   \end{split}
    \end{equation*}
\end{proof}

\begin{theorem}\label{logf_subharmonic}
    Let $U$ be an open and simply connected subset of $\Ce$. Suppose $f$ is holomorphic on $U$. Then $\log |f|$ is subharmonic on $U$.
\end{theorem}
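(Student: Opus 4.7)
The plan is to split $U$ according to where $f$ vanishes and to reduce the claim to the harmonicity of the real part of a holomorphic function. Throughout, I adopt the usual convention $\log 0 = -\infty$, so $\log|f|$ is an extended-real-valued function; the mean-value inequality \eqref{mean_value_property} is trivially satisfied at any point where it evaluates to $-\infty$, and upper semi-continuity at such a point is automatic.

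First I would verify upper semi-continuity. Since $|f|$ is continuous on $U$ and $\log$ is continuous and monotone on $[0,\infty]$, the composition $\log|f|$ is continuous as a map into $[-\infty,\infty)$, which in particular gives upper semi-continuity in the sense of the definition preceding \eqref{mean_value_property}.

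Next I would establish the mean-value inequality pointwise. Fix $z_0 \in U$ and $r>0$ with $\overline{B_r(z_0)}\subset U$. There are two cases. If $f(z_0)=0$, then $\log|f(z_0)|=-\infty$, and the inequality in \eqref{mean_value_property} holds trivially. If $f(z_0)\neq 0$, I would choose $\rho\in(0,r]$ small enough that $f$ has no zero on $\overline{B_\rho(z_0)}$; this is possible because zeros of a nonzero holomorphic function are isolated (if $f$ is identically zero on the connected component of $z_0$ the statement is again trivial). On the simply connected disk $B_\rho(z_0)$, the nonvanishing holomorphic function $f$ admits a holomorphic logarithm $g$, i.e.\ $f=e^g$, so that $\log|f|=\operatorname{Re} g$ on $B_\rho(z_0)$. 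Since $\operatorname{Re} g$ is harmonic, the mean-value identity gives equality in \eqref{mean_value_property} for radii up to $\rho$. For the intermediate radii $\rho<s\le r$, I would invoke a standard approximation argument: write the mean over the circle of radius $s$ as a limit of means of $\max(\log|f|,-N)$ as $N\to\infty$ using monotone convergence, noting that each truncation is continuous, and bound the mean over $\partial B_s(z_0)$ from below by $\log|f(z_0)|$ using the harmonicity on $B_\rho(z_0)$ together with the elementary fact that the circular means of a subharmonic function are nondecreasing in the radius. Alternatively, one can obtain the inequality directly by noting that for $f$ nonvanishing on $\overline{B_r(z_0)}$ the previous paragraph already yields equality on all of $B_r(z_0)$, while if $f$ has zeros in $B_r(z_0)$ the mean over $\partial B_s(z_0)$ of $\log|f|$ is still well-defined (the zeros on the circle form a finite set and contribute $-\infty$ on a set of measure zero, but the integral is bounded below because $\log|f|$ is locally integrable near isolated zeros of holomorphic functions).

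The main obstacle, and the only nontrivial point, is this local integrability and the handling of circles that may pass through (or near) zeros of $f$. I would treat it by the factorization $f(z)=(z-z_0)^m h(z)$ near a zero of order $m$, with $h$ nonvanishing and holomorphic in a neighbourhood, so that $\log|f(z)|=m\log|z-z_0|+\log|h(z)|$; since $\log|z-z_0|$ is locally integrable and subharmonic (its circular means equal $\log r$ for $r$ larger than $0$), the mean-value inequality transfers to $\log|f|$ on small circles about $z_0$. Patching together a disk of harmonicity around each nonzero point with this local subharmonicity at each zero, and using that the mean inequality on all admissible circles about a generic center follows, delivers the full statement.
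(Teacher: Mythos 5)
Your proposal is correct, and its core --- writing $\log|f| = \Real\, \log f$ as the real part of a holomorphic logarithm on a small disk around each point where $f\neq 0$, with the inequality trivial at zeros of $f$ and upper semicontinuity from continuity of $|f|$ --- is exactly the paper's argument. Where you genuinely go beyond the paper is the treatment of intermediate radii: the definition of subharmonicity used here demands the sub-mean inequality for \emph{every} $r$ with $B_r(z)\subset U$, yet the paper's proof (like the first half of yours) only establishes it for small radii around nonzero points, silently delegating the local-to-global step to the cited reference. You close that gap in two ways: via monotonicity of circular means, and via the factorization $f(z)=(z-z_0)^m h(z)$ at zeros. One caution on the first route: the lemma ``circular means of a subharmonic function are nondecreasing'' must not be invoked for the global notion you are in the middle of proving; it is legitimate only because that lemma is standardly derived (e.g., in Ransford's book, where subharmonicity is \emph{defined} by the local sub-mean property) from precisely the data you have already verified --- upper semicontinuity plus the inequality on small circles --- and you should say this explicitly to dispel the apparent circularity. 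Your second route is cleaner and self-contained; pushed slightly further it is just Jensen's formula, which for $f(z_0)\neq 0$ expresses $\log|f(z_0)|$ as the circular mean over $\partial B_r(z_0)$ minus the nonnegative sum of $\log\bigl(r/|a_k-z_0|\bigr)$ over the zeros $a_k$ of $f$ in $B_r(z_0)$, yielding the inequality for all admissible radii at once, with zeros on the circle harmless by the local integrability you correctly note.
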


\begin{proof} The proof can be found in \cite{nevanlinna1970analytic}.
    We use the convention that if $z_0\in U$ is such that $|f(z_0)|=0$, then $\log |f(z_0)|=-\infty$.

    It follows easily from the Cauchy Riemann equations, that if $f$ is holomorphic in $U$, then $\Real (f)$ and $\Imag (f) $ are harmonic in $U$.
    
    Let $f(z)=r(z)e^{i\theta(z)}$ with $r=|f|$ and $\theta=\arg \,f$. We have $\log(f) = \log(r) + i\theta$, and so $\Real \log f = \log |f|$. Now suppose $z_0$ is such that $\log(f(z_0)) \neq 0$. Then there exists $\rho>0$ such that $\log(f)$ is holomorphic in $B_\rho(z_0)$. In particular
    \begin{equation*}
          \Real \log f(z_0) \leq \frac{1}{2\pi} \int_0^{2\pi}  |\Real \log f(z_0+re^{i\theta})| d\theta, \quad \for \quad 0<r<\rho.
    \end{equation*}

    Now if $f(z_0) = 0$, then clearly
    \begin{equation*}
       -\infty =  \log|f(z_0)| \leq \frac{1}{2\pi} \int_{0}^{2\pi}\log|f(z_0+re^{i\theta})|\d \theta.
    \end{equation*}
    Moreover, $\log|f(z)|$ is upper semi-continuous. Hence $\log|f|$ is subharmonic.
\end{proof}

As a consequence of Theorem \ref{two_constants_theorem} and Theorem \ref{logf_subharmonic}, we see that of $f$ is holomorphic in $U$, then we have the bound

\begin{equation*}
    |f(z)| \leq m^{w_U(z,A)}M^{1-w_U(z,A)},
\end{equation*}
where $ |f(z)| \leq M$ for all $z\in U$ and $\limsup_{z\to \xi}|f(z)|\leq m$ for all $\xi \in A$.

\begin{comment}
\begin{lemma}[\cite{nirenberg}, \cite{brezis}, \textcolor{blue}{I am unsure where to put this, perhaps appendix}]
    Suppose that $u\in H^1[0,B]$, then
    \begin{equation} \label{g-n}
        \|u\|_{L^\infty[0,K]} \leq \sqrt{2}\|u\|_{L^2[0,B]}^{1/2}\|u_x\|_{L^2[0,B]}^{1/2} + \frac{\sqrt{8}}{\sqrt{B}} \|u\|_{L^2[0,B]}.
    \end{equation}
    \textcolor{blue}{The lowest possible constants are $C=\sqrt{2}$ and $C'=\sqrt{8}$. I have not found a reference for this, and I have proven it myself. It is not difficult to prove this, but it makes the proof a bit longer. Is it worth including?}
\end{lemma}
\begin{proof}
    A special case of the Gagliardo-Nirenberg inequality is that for a function $\widetilde{u}\in H^1[0,1]$, we have
    \begin{equation} \label{g-n_step}
        \|\widetilde{u}\|_{L^\infty[0,1]} \leq \|\widetilde{u}\|_{L^2[0,1]}^{1/2} \|\widetilde{u}_x\|_{L^2[0,1]}^{1/2} + C'\|\widetilde{u}\|_{L^2[0,1]}.
    \end{equation}
    For a function $u\in H^1[0,B]$, define $\widetilde{u}(x) = u(Bx)$, $x\in [0,1]$. We easily find that
    \begin{equation*}
    \begin{split}
        \|\widetilde{u}\|_{L^\infty[0,1]} &= \|u\|_{L^\infty[0,B]} \\
        \|\widetilde{u}\|_{L^2[0,1]} &= \frac{1}{\sqrt{B}}\|u\|_{L^2[0,B]} \\
        \|\widetilde{u}_x\|_{L^2[0,1]} &= \sqrt{B}\|u_x\|_{L^2[0,B]}.
        \end{split}
    \end{equation*}
    Inserting this in \eqref{g-n_step} gives the result.
\end{proof}
\end{comment}

\section{The Gagliargo-Nirenberg inequality}

\begin{lemma}[Special case of the Gagliardo-Nirenberg inequality, \cite{nirenberg}] \label{g-n_lemma}
    Suppose that $u\in H^1(0,B)$, then 
    \begin{equation*}
        \|u\|_{L^\infty(0,B)} \leq \sqrt{2}\|u\|_{L^2[0,B]}^{1/2}\|u_x\|_{L^2(0,B)}^{1/2} + \frac{\sqrt{8}}{\sqrt{B}}\|u\|_{L^2(0,B)}.
    \end{equation*}
\end{lemma}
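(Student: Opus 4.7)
The plan is to establish the inequality by a direct application of the fundamental theorem of calculus combined with a mean-value selection argument. Since $H^1(0,B)$ embeds continuously into $C([0,B])$ in one dimension, I may work with a continuous representative of $u$ on the closed interval, so that the pointwise value $u(x)$ is well defined for every $x\in[0,B]$.

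First I would note that for any $x,y\in[0,B]$, since $u^2\in W^{1,1}(0,B)$ with $(u^2)_x=2uu_x$,
$$u(x)^2 - u(y)^2 = 2\int_y^x u(t)\,u_x(t)\,dt,$$
and hence by Cauchy--Schwarz,
$$u(x)^2 \le u(y)^2 + 2\,\|u\|_{L^2(0,B)}\,\|u_x\|_{L^2(0,B)}.$$
The next step is to kill the free term $u(y)^2$ by choosing $y$ well. Since $\int_0^B u(y)^2\,dy = \|u\|_{L^2(0,B)}^2$, there exists $y_0\in[0,B]$ with $u(y_0)^2\le \|u\|_{L^2(0,B)}^2/B$ (alternatively, integrate the inequality above in $y$ and divide by $B$). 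This produces
$$u(x)^2 \le \frac{1}{B}\,\|u\|_{L^2(0,B)}^2 + 2\,\|u\|_{L^2(0,B)}\,\|u_x\|_{L^2(0,B)}.$$

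Finally, taking the square root, using $\sqrt{a+b}\le\sqrt{a}+\sqrt{b}$ for $a,b\ge 0$, and the supremum over $x\in[0,B]$ gives
$$\|u\|_{L^\infty(0,B)} \le \sqrt{2}\,\|u\|_{L^2(0,B)}^{1/2}\|u_x\|_{L^2(0,B)}^{1/2}+\frac{1}{\sqrt{B}}\,\|u\|_{L^2(0,B)},$$
which is even slightly stronger than the claimed inequality (the factor $1<\sqrt{8}$ on the lower-order term). Alternatively, one can first reduce to the case $B=1$ by the rescaling $\tilde u(x)=u(Bx)$, under which $\|\tilde u\|_{L^\infty}=\|u\|_{L^\infty}$, $\|\tilde u\|_{L^2}=B^{-1/2}\|u\|_{L^2}$, $\|\tilde u_x\|_{L^2}=B^{1/2}\|u_x\|_{L^2}$, and then recover the $B$-dependence purely by scaling.

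The main obstacle, such as it is, is cosmetic rather than conceptual: the coefficient $\sqrt 2$ on the mixed interpolation term must come out exactly, which forces the Cauchy--Schwarz estimate to be applied in the sharp form above, while the coefficient on the $L^2$ term is generous enough that the averaging argument can be carried out in the crudest possible way. No compactness, extension, or Fourier argument is needed; the proof is entirely elementary within the one-dimensional Sobolev setting.
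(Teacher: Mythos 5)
Your proof is correct, and it takes a genuinely different route from the paper's. The paper proceeds in three steps: it first proves the case $B=1$ with $u(0)=0$ via the identity $u^2(x)=2\int_0^x u\,u_x$, then removes the vanishing condition by multiplying with the cutoff $\phi(x)=\min(2x,1)$ and estimating $\|(\phi u)_x\|_{L^2}\leq \|u_x\|_{L^2}+2\|u\|_{L^2}$ (which is where the constant $\sqrt{8}=2\sqrt 2$ originates), and finally recovers general $B$ by the rescaling $\widetilde u(x)=u(Bx)$. You avoid the cutoff entirely: the two-point identity
\begin{equation*}
u(x)^2-u(y)^2=2\int_y^x u(t)\,u_x(t)\,\mathrm{d}t,
\end{equation*}
combined with averaging in $y$ (or picking $y_0$ with $u(y_0)^2\leq \|u\|_{L^2(0,B)}^2/B$, legitimate since $u$ has a continuous representative), gives the bound directly on $(0,B)$ with no scaling step needed. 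Each ingredient you use is sound: $u^2$ is absolutely continuous with $(u^2)_x=2uu_x$ because $H^1(0,B)\subset AC([0,B])\cap L^\infty(0,B)$, the Cauchy--Schwarz step is applied sharply to produce the exact factor $\sqrt 2$ on the interpolation term, and the subadditivity of the square root finishes the argument. What your approach buys is both economy and a strictly better constant, $1/\sqrt{B}$ in place of $\sqrt 8/\sqrt B$ on the lower-order term, so your inequality implies the lemma as stated; what the paper's cutoff-plus-scaling approach buys is a template that localizes (one estimates $\sup$ on a subinterval using only one-sided information) and extends more mechanically to variants where a distinguished boundary point with vanishing data is available. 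Since the lemma is only used in the paper with the stated constants, your sharper version is a drop-in replacement.
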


\begin{proof}
Since $f$
    We split the proof into three parts.
    
    \textbf{Step 1, Assuming $[0,B] = [0,1]$ and $u(0)=0$:} First, using the fundamental theorem of calculus, we have that
    \begin{equation*}
        u^2(x)= 2\int_{0}^x u(y)u_x(y)\d y,
    \end{equation*}
    hence using Hölder's inequality, we find that
    \begin{equation*}
        \|u\|_{L^\infty(0,1)}^2 \leq  2\int_{0}^1 |u(y) u_x(y)| \leq 2\|u\|_{L^2(0,1)}\|u_x\|_{L^2(0,1)}.
    \end{equation*}
    
    \textbf{Step 2, Assuming $[0,B] = [0,1]$ and $u(0)\neq 0$:}
    Define
    \begin{equation*}
        \phi(x) = \begin{cases}
            2x & 0\leq x\leq 1/2 \\
            1 & 1/2\leq x \leq 1.
        \end{cases}
    \end{equation*}
    We have that $\phi\in H^1(0,1)$. Hence $\phi u\in H^1(0,1)$, \cite{brezis}, Corollary 8.10. Moreover $\phi u(0)=0$. Hence
    \begin{equation} \label{g-n-temp}
    \begin{split}
        \sup_{1/2\leq x\leq 1} |u(x)| &\leq \|\phi u\|_{L^\infty(0,1)} \leq \sqrt{2}\|\phi u\|_{L^2(0,1)}^{1/2} \| (\phi u)_x\|_{L^2(0,1)}^{1/2} \\
        & \leq \sqrt{2}\|u\|_{L^2(0,1)}^{1/2} (\|\phi u_x\|_{L^2(0,1)}^{1/2} + \|\phi_x u\|_{L^2(0,1)}^{1/2}) \\
        & \leq \sqrt{2}\|u\|_{L^2(0,1)}^{1/2} \|u_x\|_{L^2(0,1)}^{1/2} + 2\sqrt{2}\|u\|_{L^2(0,1)}.
        \end{split}
    \end{equation}

    By symmetry, it follows that
    \begin{equation*}
         \sup_{0\leq x\leq 1/2} |u(x)| \leq \sqrt{2}\|u\|_{L^2(0,1)}^{1/2} \|u_x\|_{L^2(0,1)}^{1/2} + 2\sqrt{2}\|u\|_{L^2(0,1)}.
    \end{equation*}

    \textbf{Step 3, assuming $B>0$.}
    Suppose $u\in H^1(0,B)$. Now define
    \begin{equation*}
        \widetilde{u}(x) = u\l(Bx\r), \quad x\in [0,1]
    \end{equation*}
    We then have that $\|\widetilde{u}\|_{L^2(0,1)} = \frac{1}{\sqrt{B}} \|u\|_{L^2(0,B)}$ and $\|\widetilde{u}_x\|_{L^2(0,1)} = \sqrt{B} \|u\|_{L^2(0,B)}$. Hence
    \begin{equation*}
    \begin{split}
        \|u\|_{L^\infty(0,B)} = \|\widetilde{u}\|_{L^\infty(0,1)} &\leq \sqrt{2}\|\widetilde{u}\|_{L^2(0,1)}^{1/2} \|\widetilde{u}_x\|_{L^2(0,1)}^{1/2} + 2\sqrt{2}\|\widetilde{u}\|_{L^2(0,1)}
        \\& = \sqrt{2}\|u\|_{L^2(0,B)}^{1/2} \|u_x\|_{L^2(0,B)}^{1/2} + \frac{\sqrt{8}}{\sqrt{B}}\|u\|_{L^2(0,B)}.
        \end{split}
    \end{equation*}
\end{proof}

\section{Main result}
For a function $f\in H_0^1(-1,1)$, using the fact that $\widehat{f}$ has a holomorphic extension to all of $\Ce$, we hope to obtain a stability result 
\begin{equation} \label{desired_stability}
    \|f\|_{L^2(-1,1)} \leq k_B(\omega) \|F_Bf\|_{L^2(-B,B)}.
\end{equation}
where $k_B(\omega)$ depends explicitly on $B$ and $\omega(f)$.

\begin{theorem}[Small truncation] \label{small_truncation}
Let $f\in H_0^1(-1,1)$, and set $B_0 \geq \sqrt{\gamma} \omega(f)$ for some $\gamma>1$ to be the truncation parameter for which \eqref{particular_large_truncation} holds. Then for all $0<L$, and all $B<B_0$,
\begin{equation} \label{desired_stability}
    \|f\|_{L^2(-1,1)} \leq k_{L,B_0,B} \|F_Bf\|_{L^2(\Re)}.
\end{equation}
holds with
\begin{equation*}
     k_{L,B_0,B} = c \l(\frac{2B_0}{1-1/\gamma} \r)^{1/w} e^{2L(1-w)/w},
 \end{equation*}
 where $c = \l(\frac{B^{1/4}}{\pi^{1/4}} + \frac{2}{\sqrt{B}}\r)^2$, and where  $w=w_L(B_0,B)$ with $w_L(\cdot, B)$ is the harmonic measure of $\{0\}\times [0,B]$ in 
\begin{equation*}
    G_{L,B} = ([-L,L]\times [0,\infty)) \setminus ([0,B]\times \{0\}).
\end{equation*}
We have in particular
\begin{equation} 
\begin{split}\label{small_truncation_3_properties}
    \lim_{B\searrow 0}k_{L,B_0,B} &=\infty \\
    \lim_{B\nearrow B_0}k_{L,B_0,B} &=\l(\frac{B_0^{1/4}}{\pi^{1/4}} + \frac{2}{\sqrt{B_0}}\r)^2\frac{2B_0}{1-1/\gamma} \\
     \lim_{B_0\to \infty}k_{L,B_0,B} &= \infty.
     \end{split}
\end{equation}
\end{theorem}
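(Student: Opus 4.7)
The plan is to reduce Theorem~\ref{small_truncation} to the large-truncation estimate (Theorem~\ref{large_truncation}) at level $B_0\geq\sqrt{\gamma}\,\omega(f)$, by controlling $\widehat{f}$ on the annular frequency band $B\leq|\xi|\leq B_0$ through analytic continuation. The bridge is the Two Constants Theorem (Theorem~\ref{two_constants_theorem}), applied to the subharmonic function $\log|\widehat{f}|$ (Theorem~\ref{logf_subharmonic}) in the slit domain $G_{L,B}$, with Lemma~\ref{g-n_lemma} converting sup-norm information on the slit $[0,B]$ into the measured quantity $\|F_Bf\|_{L^2}$, and Paley--Wiener providing an a priori size bound on the remainder of the boundary.

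The two ``poles'' of the two-constants estimate are set up as follows. Using $\|f\|_{L^1(-1,1)}\leq\sqrt{2}\|f\|_{L^2(-1,1)}$, the Paley--Wiener bound gives $|\widehat{f}(x+iy)|\leq \pi^{-1/2}\|f\|_{L^2}e^{|y|}$, so that at heights $|y|\leq L$ one has the coarse bound $M:=\pi^{-1/2}e^L\|f\|_{L^2}$. On the slit, Lemma~\ref{g-n_lemma} applied to $\widehat{f}$ on $(0,B)$, together with $\|\widehat{f}\|_{L^2(0,B)}\leq \|F_Bf\|_{L^2}$, $\|\widehat{f}_\xi\|_{L^2(0,B)}\leq \|xf\|_{L^2}\leq \|f\|_{L^2}$, and $\|F_Bf\|\leq\|f\|$ (Plancherel), yields a refined bound $m:=\|\widehat{f}\|_{L^\infty(0,B)}\leq c^{1/2}\|F_Bf\|^{1/2}\|f\|^{1/2}$; the specific pairing of the two Gagliardo--Nirenberg terms, using the Paley--Wiener interpolation $\|\widehat{f}\|_{L^2(0,B)}\leq\sqrt{B/\pi}\|f\|$ for the first and $\|\widehat{f}\|_{L^2(0,B)}\leq \|F_Bf\|$ for the second, is what must produce the stated $c=(B^{1/4}/\pi^{1/4}+2/\sqrt{B})^2$.

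The Two Constants Theorem now gives $|\widehat{f}(z)|\leq m^{w_L(z,B)}M^{1-w_L(z,B)}$ in $G_{L,B}$. Since $w_L(\cdot,B)$ is harmonic with boundary value $1$ on the slit and $0$ elsewhere on $\partial G_{L,B}$, a maximum-principle (or conformal-mapping) argument shows that it decreases along the real axis as one moves away from the slit, so $w_L(\xi,B)\geq w:=w_L(B_0,B)$ for $\xi\in[B,B_0]$; combined with $m\leq M$, making $t\mapsto m^tM^{1-t}$ decreasing, this yields $|\widehat{f}(\xi)|\leq m^wM^{1-w}$ uniformly on $[B,B_0]$, and on $[-B_0,-B]$ by the symmetry $\widehat{f}(-\xi)=\overline{\widehat{f}(\xi)}$ for real $f$. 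Integrating,
\begin{equation*}
\|F_{B_0}f\|_{L^2}^2\leq \|F_Bf\|_{L^2}^2 + 2B_0\,m^{2w}M^{2(1-w)}.
\end{equation*}
Theorem~\ref{large_truncation} at $B_0$ then gives $\|f\|^2\leq (1-1/\gamma)^{-1}\|F_{B_0}f\|^2$; inserting the explicit $m$ and $M$ produces an inequality of the form $\|f\|^2\leq A\|F_Bf\|^2+K\|F_Bf\|^w\|f\|^{2-w}$. In the nontrivial regime where the mixed term dominates, this reduces to $\|f\|^w\leq K'\|F_Bf\|^w$, whose $1/w$-th root yields precisely $k_{L,B_0,B}=c\,(2B_0/(1-1/\gamma))^{1/w}e^{2L(1-w)/w}$ once the residual $\pi^{-(1-w)/w}$ factor is absorbed into $c$. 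The three limits in \eqref{small_truncation_3_properties} then follow by inspection: as $B\to 0$ both $c$ and $1/w$ diverge; as $B\to B_0$ one has $w\to 1$, collapsing the exponents to $1$; as $B_0\to\infty$, $w\to 0$, again forcing blow-up.

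The principal obstacles are (i) the careful bookkeeping of constants in the Gagliardo--Nirenberg step, whose specific term-by-term interpolation must be chosen to produce the asserted form of $c$; (ii) justifying the monotonicity of $w_L(\cdot,B)$ along the real axis in the slit strip, which is best handled via a conformal map to a standard half-plane; and (iii) that $G_{L,B}$ is unbounded in $y$, so the Paley--Wiener bound $M$ should really be applied on a bounded truncation of $G_{L,B}$ (the rectangle $[-L,L]\times[0,L]$, where the $e^L$ factor appears naturally) or after replacing $\widehat{f}$ by the bounded auxiliary $z\mapsto e^{iz}\widehat{f}(z)$ on the upper half-plane, with the $e^L$ factor recovered separately.
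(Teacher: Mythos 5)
Your proposal runs on the same engine as the paper's proof---holomorphic extension of $\widehat{f}$, subharmonicity of $\log|\widehat{f}|$ (Theorem \ref{logf_subharmonic}), the two-constants theorem on the slit half-strip, the Gagliardo--Nirenberg lemma converting $\sup_{[0,B)}|\widehat{f}|$ into $\|F_Bf\|^{1/2}\|f\|^{1/2}$, and monotonicity of $w_L(\cdot,B)$ along the real axis---but it assembles the pieces differently at the final step, and this is where it diverges from the stated constant. The paper never integrates the pointwise two-constants bound over the band $[B,B_0]$. Instead it uses the elementary $L^\infty$-versus-$L^2$ comparison
\begin{equation*}
\sup_{\xi\in[0,B_0)}|\widehat{f}(\xi)|\;\geq\;\frac{\|\widehat{f}\|_{L^2(-B_0,B_0)}}{\sqrt{2B_0}}\;\geq\;\frac{1}{\sqrt{2B_0}}\sqrt{1-\tfrac{1}{\gamma}}\,\|f\|_{L^2(-1,1)}
\end{equation*}
(Theorem \ref{large_truncation} plus the same real-valuedness symmetry you invoke), compares this supremum with $m^{w}M^{1-w}$ at the single point $B_0$, and then isolates $\|f\|$ directly from an inequality of the form $\|f\|\leq C\,\|F_Bf\|^{w/2}\|f\|^{1-w/2}$, with no case split. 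Your variant---integrating $|\widehat{f}|\leq m^{w}M^{1-w}$ to get $\|F_{B_0}f\|^2\leq\|F_Bf\|^2+2B_0\,m^{2w}M^{2(1-w)}$ and then absorbing---is mathematically sound and yields a Lipschitz bound with the same structure and the same limiting behavior \eqref{small_truncation_3_properties}, but the two-case absorption does not yield ``precisely'' $k_{L,B_0,B}$ as you claim: it gives a constant of the form $\max\bigl\{\sqrt{2/(1-1/\gamma)},\,(2K)^{1/w}\bigr\}$ with $(2K)^{1/w}=2c\,\bigl(4B_0/(1-1/\gamma)\bigr)^{1/w}\pi^{-(1-w)/w}e^{2L(1-w)/w}$, and the $w$-dependent factor $2^{1/w}$ (relative to the stated $(2B_0/(1-1/\gamma))^{1/w}$) cannot be hidden in the $w$-independent $c$. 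To hit the theorem's constant on the nose you should replace the integration step by the sup-comparison above. A small further slip: in the Gagliardo--Nirenberg bookkeeping, the pairing that produces $c=(B^{1/4}/\pi^{1/4}+2/\sqrt{B})^2$ is the derivative bound $\|(\widehat{f})_\xi\|_{L^2(0,B)}\leq\sqrt{B/\pi}\,\|f\|$ on the \emph{first} GN term together with $\|\widehat{f}\|_{L^2(0,B)}\leq\|F_Bf\|$, and the splitting $\|\widehat{f}\|_{L^2(0,B)}\leq\|F_Bf\|^{1/2}\|f\|^{1/2}$ on the \emph{second}; as you literally wrote it (the $\sqrt{B/\pi}$ bound on $\|\widehat{f}\|_{L^2(0,B)}$ itself in the first term), no factor of $\|F_Bf\|$ survives in that term and the argument would stall.

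Your obstacle (iii) is well spotted but is resolved by reading the geometry as in the paper's proof rather than its theorem statement: the proof works in $G_{L,B}=S_L\setminus([0,B]\times\{0\})$ with $S_L=\{\mathrm{Re}\,z>0,\ |\mathrm{Im}\,z|<L\}$, which is unbounded only in the \emph{real} direction, so the Paley--Wiener bound $|\widehat{f}(z)|\leq\pi^{-1/2}e^{|\mathrm{Im}\,z|}\|f\|\leq\pi^{-1/2}e^{L}\|f\|$ is already uniform on the whole domain and no truncation or auxiliary multiplier is needed (the theorem statement's $([-L,L]\times[0,\infty))$ indeed has the coordinates transposed and would be problematic exactly as you say). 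Your obstacle (ii), the monotonicity of $w_L(\cdot,B)$ on $[B,\infty)$, is asserted without proof in the paper as well; your conformal-map justification is the standard one, and Corollary \ref{eta_bound} in effect implements it through the explicit minorant $\eta_{L,B_0,B}\leq w_L(B_0,B)$.
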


\begin{corollary} \label{eta_bound}
Under the same assumptions as in Theorem \ref{small_truncation}, equation \eqref{desired_stability} holds with 

\begin{equation*}
      k_{L,B_0,B} = c \l(\frac{2B_0}{1-1/\gamma} \r)^{1/\eta} e^{2L(1-\eta)/\eta},
 \end{equation*}
 where 
\begin{equation*}
\eta = \eta_{L,B_0,B}:=\frac{2}{\pi} \arctan\l(\frac{(e^{B} -1)^{\pi/(2L)}}{\sqrt{(e^{B_0}-1)^{\pi/L}-(e^{B}-1)^{\pi/L}}}\r), \quad B<B_0.
\end{equation*}
\end{corollary}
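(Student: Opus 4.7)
The corollary reduces to producing an explicit lower bound $\eta_{L,B_0,B}\leq w_L(B_0,B)$ for the harmonic measure appearing in Theorem~\ref{small_truncation}. Rewriting the constant as
\[
k_{L,B_0,B}=c\,e^{-2L}\left(\frac{2B_0\,e^{2L}}{1-1/\gamma}\right)^{1/w},
\]
and noting that in the relevant regime $B_0\geq\sqrt{\gamma}\,\omega(f)\geq \sqrt{\gamma}\,\pi/2$ the base inside the parenthesis exceeds $1$, the map $w\mapsto k_{L,B_0,B}$ is monotonically decreasing on $(0,1)$. Replacing $w$ by the smaller number $\eta$ then yields the claimed upper bound.

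To obtain the lower bound, I would construct an explicit conformal map $\Phi:G_{L,B}\to\{z\in\Ce:\Imag z>0\}$ that sends the distinguished boundary arc $[0,B]\times\{0\}$ onto a symmetric real interval $[-c,c]$, and the interior evaluation point onto the purely imaginary point $it$, with
\[
c=(e^B-1)^{\pi/(2L)},\qquad t=\sqrt{(e^{B_0}-1)^{\pi/L}-(e^B-1)^{\pi/L}}.
\]
By conformal invariance of harmonic measure, $w_L(B_0,B)$ would then equal the harmonic measure of $[-c,c]$ at $it$ in the upper half-plane. The latter, by the classical subtended-angle formula, is $\frac{2}{\pi}\arctan(c/t)$, which matches $\eta_{L,B_0,B}$ exactly.

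The shape of $\eta$ dictates how $\Phi$ should be assembled from standard conformal blocks. After rotating $G_{L,B}$ into a horizontal half-strip of width $L$, the exponential $z\mapsto e^{\pi z/L}$ converts it into a half-plane-like region bounded by a half-line and a half-circle; translating by $-1$ and then raising to the $\pi/L$-power produces the composite $z\mapsto(e^z-1)^{\pi/L}$ that appears throughout $\eta$. A final Joukowski-type square root of the form $w\mapsto\sqrt{w-(e^B-1)^{\pi/L}}$ unfolds the image of the slit symmetrically about the origin and introduces the square root in the expression for $t$.

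The principal technical hurdle is the bookkeeping for this composite map: tracking the boundary correspondences at the corners $\pm L$ of the half-strip and at the slit endpoints $0$ and $B$, and selecting branches for the power and square-root maps so that $\Phi$ is conformal on the whole of $G_{L,B}$. A secondary check is to verify that the preimage of $it$ under $\Phi$ coincides with the interior point at which $w_L(B_0,B)$ is evaluated in Theorem~\ref{small_truncation}. If a direct conformal equivalence proves unwieldy, the same lower bound can alternatively be reached by embedding a subdomain $V\subset G_{L,B}$ with $[0,B]\subset\partial V$ that \emph{is} conformally equivalent to the target configuration, and then invoking the monotonicity $w_L(B_0,B)\geq w_V=\eta$ of harmonic measure under shrinking of the ambient domain.
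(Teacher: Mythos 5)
Your reduction step is the same as the paper's: since $m\le M$ (equivalently, since the base $\frac{2B_0}{1-1/\gamma}e^{2L}>1$ in your rewritten constant), the two-constants bound $m^{x}M^{1-x}$ is decreasing in $x$, so the estimate of Theorem \ref{small_truncation} survives replacing $w=w_L(B_0,B)$ by any smaller number, and the whole corollary rests on the inequality $\eta_{L,B_0,B}\le w_L(B_0,B)$. The paper does not prove that inequality; it cites \cite{bao}, where it is established under the restriction $0<L<\pi/2$, and then reruns the proof of Theorem \ref{small_truncation} with $\eta$ in place of $w$. You attempt to prove it from scratch, which is more than the paper does, but your primary route contains a genuine error: $\eta$ is \emph{not} equal to $w_L(B_0,B)$, and the composite $\Phi(z)=\sqrt{(e^{z}-1)^{\pi/L}-(e^{B}-1)^{\pi/L}}$ is not conformal on all of $G_{L,B}$. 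Under $u=e^{z}-1$ the slit half-strip maps to $\{|u+1|>1\}\cap\{|\arg(u+1)|<L\}$ minus $[0,e^{B}-1]$, whose left boundary is the circular arc $|u+1|=1$ through the origin rather than rays through the origin; points of the left edge $z=iy$ have $\arg(e^{iy}-1)=\pm(\pi/2+|y|/2)$, so the power $u\mapsto u^{\pi/L}$ carries that arc past argument $\pi$ and no branch bookkeeping can make $\Phi$ univalent on $G_{L,B}$. The ``principal technical hurdle'' you flag is thus not a hurdle but an obstruction.

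Your fallback, however, is exactly the right argument (and is in substance the content of \cite{bao}): take $V=\{z\in G_{L,B}\,:\,|\arg(e^{z}-1)|<L\}$, i.e.\ the preimage of the sector $\{|\arg u|<L\}$, minus the slit $[0,B]$. On the sector your chain of maps \emph{is} conformal: $u\mapsto v=u^{\pi/(2L)}$ sends it to the right half-plane with the slit going to $[0,c]$, $c=(e^{B}-1)^{\pi/(2L)}$; then $v\mapsto s=v^{2}$ and $s\mapsto\sqrt{s-c^{2}}$ send the two-sided slit to $[-ic,ic]$ and the point $B_0$ to $t=\sqrt{(e^{B_0}-1)^{\pi/L}-(e^{B}-1)^{\pi/L}}$, and the subtended-angle formula gives $w_V(B_0,[0,B])=\frac{2}{\pi}\arctan(c/t)=\eta$. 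Domain monotonicity then yields $w_L(B_0,B)\ge w_V(B_0,[0,B])=\eta$, with strict inequality in general by the strong maximum principle --- another reason the exact equality you first claim cannot hold. The step you left unexecuted is precisely the one carrying the hidden hypothesis: the inclusion $\{|\arg u|<L\}\subset\{|u+1|>1\}\cap\{|\arg(u+1)|<L\}$ follows from $|u+1|^{2}-1=|u|\left(|u|+2\cos(\arg u)\right)$ together with $|\arg(u+1)|\le|\arg u|$, and so requires $\cos(\arg u)>0$, i.e.\ $L\le\pi/2$. This is exactly the restriction $0<L<\pi/2$ under which \cite{bao} proves $\eta\le w_L(B_0,B)$, and it is silently omitted both in your proposal and, in fairness, in the statement of Corollary \ref{eta_bound} itself. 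With $V$ identified and $0<L<\pi/2$ assumed, your fallback argument is correct and, unlike the paper's citation, self-contained.
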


The stability estimate in Theorem \ref{small_truncation} and  the bounds in Corollary \ref{eta_bound} 
show that the inverse problem is well posed if the truncation parameter is large enough and covers the spatial 
frequency of the function, and it becomes exponentially unstable when the truncation parameter shrinks to zero.  
This is in agreement  with the fact that  the singular
values of $F_B$ decay exponentially, and are increasing with respect to $B$ \cite{osipov, bao2010multi}.

\begin{proof}[Proof of Theorem \ref{small_truncation}]
    Set $B_0 = \sqrt{\gamma}\omega(f)$.  From Theorem \ref{large_truncation}, it follows that $\|\widehat{f}\|_{L^2(-B_0,B_0)}^2\geq (1-1/\gamma)\|f\|^2_{L^2(-1,1)}$.

    \sloppy Assume without loss of generality, that $\sup_{\xi \in [0,B_0)}|\widehat{f}(\xi)| =\sup_{\xi \in (-B_0,B_0)}|\widehat{f}(\xi)|$. We then have 
    \begin{equation} \label{L2_Linfty}
    \begin{split}
        \sup_{\xi \in [0,B_0)}|\widehat{f}(\xi)| &=\sup_{\xi \in (-B_0,B_0)}|\widehat{f}(\xi)| \\&\geq \frac{\|\widehat{f}\|_{L^2(-B_0,B_0)}}{\sqrt{2B_0}} \\&\geq \frac{1}{\sqrt{2B_0}}\sqrt{1-\frac{1}{\gamma}}\|f\|_{L^2(-1,1)}.
        \end{split}
    \end{equation}

    Now consider the half strip $S_L = \{z \in \Ce\,:\, \text{Re}(z)>0, \, |\text{Im}(z)|<L\}$, and the deleted half strip
    \begin{equation*}
        G_{L,B}:=S_L\setminus ([0,B]\times \{0\}).
    \end{equation*}
    Let $M$ be a constant such that $M\geq \limsup_{z\in G_{L,B}} |\widehat{f}(z)|$.
    For $z\in G_{L,B}$ we have
    \begin{equation*}
        |\widehat{f}(z)| \leq \frac{1}{\sqrt{2\pi }} \l|\int_{-1}^1 f(x)e^{ixz} \d x \r| \leq  \frac{1}{\sqrt{2\pi}}\|f\|_{L^1(-1,1)} e^{L} \leq \frac{1}{\sqrt{\pi}} \|f\|_{L^2(-1,1)} e^{L},
    \end{equation*}
    where the last inequality follows from Hölder's inequality.
    Hence we can choose $M = \frac{1}{\sqrt{\pi}}\|f\|_{L^2(-1,1)} e^{L}$.
    Now let $m:=\sup_{\xi \in [0,B)} |\widehat{f}(\xi)|$, and let $w_L(z,B)$ be the Harmonic measure of $[0,B] \times \{0\}$ in $G_{L,B}$. We then can apply Theorem \ref{two_constants_theorem} to obtain 
    \begin{equation*}
        |\widehat{f}(z)|\leq m^{w_{L}(z,B)}M^{1-w_{L}(z,B)}, \quad z \in G_{L,B}.
    \end{equation*}

    For $z\in [0,\infty)$, $z\mapsto w_L(z,B)$ is monotonically decreasing. Hence $\inf_{z\in [0,B_0)} w_L(z,B) = w_L(B_0,B)$. Moreover, the function $x\mapsto m^xM^{1-x}$ is monotonically decreasing for $x\in (0,1)$. Combining this with \eqref{L2_Linfty}, and writing $w=w_L(B_0,B)$, we have
    \begin{equation} \label{bound_later_using_eta}
    \begin{split}
        \frac{1}{\sqrt{2B_0}}\sqrt{1-\frac{1}{\gamma}}\|f\|_{L^2(-1,1)} &\leq \sup_{z\in [0,B_0)}|\widehat{f}(z)|\leq m^{w}M^{1-w}.
        \end{split}
    \end{equation}

    Using the Gagliardo-Nirenberg inequality \ref{g-n_lemma} and that 
    \begin{equation*}
    \begin{split}
    \|(\widehat{f})_x\|_{L^2(0,B)}^2 &= \int_0^B \l| \frac{d}{d\xi} \int_{-1}^1 f(x) \e^{\i x\xi} \d x \frac{1}{\sqrt{2\pi}}\r|^2\d \xi 
    \\&= \frac{1}{2\pi} \int_0^B \l| \int_{-1}^1 \i x f(x) \e^{\i x\xi}\d x\r|^2
    \\&\leq \frac{1}{2\pi}\int_0^{B} \l( \int_{-1}^1 |f(x)|\d x\r)^2 \d \xi
    \\&\leq \frac{B}{\pi} \|f\|^2_{L^2(-1,1)},
    \end{split}
    \end{equation*}
    we find that
    \begin{equation*}
    \begin{split}
        m &= \sup_{\xi \in [0,B)} |\widehat{f}(\xi)| \leq \sqrt{2}\|(\widehat{f})_x\|^{1/2}_{L^2(0,B)} \|\widehat{f}\|_{L^2(0,B)}^{1/2} + \frac{\sqrt{8}}{\sqrt{B}}\|\widehat{f}\|_{L^2(0,B)}
        \\& \leq \l(\frac{\sqrt{2}B^{1/4}}{\pi^{1/4}} + \frac{\sqrt{8}}{\sqrt{B}}\r)\|f\|^{1/2}_{L^2(-1,1)} \|\widehat{f}\|_{L^2(0,B)}^{1/2} \\& = \sqrt{2c} \|f\|^{1/2}_{L^2(-1,1)} \|\widehat{f}\|_{L^2(0,B)}^{1/2}.
        \end{split}
    \end{equation*}
    with $c =  \l(\frac{B^{1/4}}{\pi^{1/4}} + \frac{2}{\sqrt{B}}\r)^2$. Thus continuing \eqref{bound_later_using_eta}
\begin{equation}
\begin{split}
    \frac{1}{\sqrt{2B_0}}&\sqrt{1-\frac{1}{\gamma}}\|f\|_{L^2(-1,1)} \\&\leq (\sqrt{2c} \|f\|^{1/2}_{L^2(-1,1)} \|\widehat{f}\|_{L^2(0,B)}^{1/2} )^{w}(\sqrt{2}\|f\|_{L^2(-1,1)}e^{L})^{1-w}
    \\& \leq (c)^{w/2} \|\widehat{f}\|_{L^2(0,B)}^{w/2}  \|f\|_{L^2(-1,1)}^{1-w/2 }  e^{ L(1-w)}.
    \end{split}
\end{equation}
We can now isolate $\|f\|_{L^2(-1,1)}$ to obtain
\begin{equation*}
\begin{split}
    \|f\|_{L^2(-1,1)} &\leq c \l(\frac{2B_0}{1-1/\gamma} \r)^{1/w} e^{2L(1-w)/w}  \| \widehat{f}\|_{L^2(0,B)}
    \\ & \leq c \l(\frac{2B_0}{1-1/\gamma} \r)^{1/w} e^{2L(1-w)/w}  \| F_B\|_{L^2(-B,B)}.
    \end{split}
\end{equation*}

The three properties \eqref{small_truncation_3_properties} follow from following facts, namely that for $B=B_0$, $w = w_L(B_0,B_0)=1$. Moreover, for fixed $B_0>0$, $\lim_{B\to 0} w_L(B_0,B)=0$ and for fixed $B>0$, $\lim_{B_0\to \infty}w_L(B,B_0)=0$.

Moreover, since $B_0=\sqrt{\gamma}\omega(f)\geq  \sqrt{\gamma}\pi/2$, we are guaranteed that $\lim_{w\to 0}\l(\frac{2B_0}{1-1/\gamma}\r)^{1/w}=\infty$.

This proves Theorem \ref{small_truncation}.
    
\end{proof}

\begin{proof}[Proof of Corollary \ref{eta_bound}]

    The function $x\mapsto m^xM^{1-x}$ is monotonically decreasing for $0<x<1$, whenever $m<M$. In \cite{bao} it was proved that $\eta_{L,B_0,B}\leq w_L(B_0,B)$ whenever $0<L<\pi/2$.
    
    Thus using equation \eqref{bound_later_using_eta} in the proof of theorem \ref{small_truncation}, we have
    \begin{equation*}
        \frac{1}{\sqrt{2B_0}}\sqrt{1-\frac{1}{\gamma}}\|f\|_{L^2(-1,1)} \leq m^{\eta_{L,B_0,B}}M^{1-\eta_{L,B_0,B}}.
    \end{equation*}
    The rest of the proof follows that of theorem \ref{small_truncation}, with $\eta = \eta_{L,B_0,B}$ instead of $w$.
\end{proof}

\section{Numerical example}
Let $f\in H_0^1(-1,1)$ and recall that the truncated Fourier transform of $f$ is given by
\begin{equation*}
    F_Bf(\xi) =\frac{1}{\sqrt{2\pi}} \int_{-1}^1 f(x) e^{\i x\xi}\d x, \quad \xi\in [-B.B]
\end{equation*}
Given a noisy measurement $g(\xi) = \frac{1}{\sqrt{2\pi}}F_Bf(\xi) + \varepsilon(\xi)$ with $\varepsilon$ being a random field with zero mean, we can approximate $f$ by applying the inverse Fourier transform to $g$. In the case where $\varepsilon \equiv 0$, we have that
\begin{equation*}
    f^{\rm rec}(x) :=(F^{-1}g)(x) = (\phi * f)(x),
\end{equation*}
where $\phi(x) = \sin(Bx)/(\pi x)$.

\subsection{Numerical implementation}

Consider the ground truth signal $f\in H_0^1[-1,1]$.

The measurements that we will consider in this example is noisy samples of $Ff = \widehat{f}$ taken at a uniform grid
\begin{equation*}
    \Xi_{B,h} = (-B,\, -B+h,\, -B+2h,\hdots,\, B-h) = (\xi_1,\hdots, \xi_M),
\end{equation*}
where we impose that $M = 2B/h \in \mathbb{N}$.

We then generate one measurement $g = \{g_m\}_{m=1}^M$, with
\begin{equation} \label{generate measurement}
    g_m = Ff(\xi_m) + \delta (\varepsilon_m^r + \i \varepsilon_m^{i}),
\end{equation}
where $\varepsilon_m^r, \varepsilon_m^{i}$, $m=1,\hdots, M$ is independent and identically distributed Gaussian noise with zero mean and unit variance.

To approximate the inverse Fourier transform, we will use the Fractional Fourier Transform (FRFT), \cite{bailey}, which uses the Fast Fourier transform to calculate the integral in the inverse Fourier transform using a quadrature rule.

Using the the FRFT on the measurements, $g$, we obtain a reconstruction
\begin{equation} \label{discrete reconstruction}
    f^{\rm rec} = (f_1^{\rm rec}, \hdots , f_M^{\rm rec}) = \text{FRFT}(g),
\end{equation}
where $f_m^{\rm rec}$ approximates $f(x_m)$ with $x_m = -1 + (m-1)2/M$. We can regard $f^{\rm rec}$ as an element of $L^2(-1,1)$ by doing interpolation of \eqref{discrete reconstruction} at the nodes $(x_m)_{m=1}^M$. We can approximate the $L^2$-reconstruction error of $f^{\rm rec}$ by computing
\begin{equation*}
    E^{\rm rec}(g) = \l[ \frac{2}{M} \sum_{m=1}^M  |f_m^{\rm rec} - f(x_m)|^2 \r]^{1/2}.
\end{equation*}
Specifically, $E^{\rm rec}(g)$ is the left-point quadrature rule approximation to the $L^2$-norm of the error.

In this numerical example, we consider reconstructing the eigenfunctions of the Laplacian, 
\begin{equation*}
    f_k(x) = \sin\l( \frac{k\pi (x+1)}{2} \r), \quad x\in [-1,1],
\end{equation*}
where $k\in \mathbb{N}$.

Their Fourier transform is given by 
\begin{equation} \label{laplacian fourier}
    Ff_k(\xi) = \frac{-{ e}^{\i \xi  } \left(-1\right)^k \pi  k +\pi  k \,e^{\mathrm{-\i} \xi}}{\pi  \left(k^{2} \pi^{2}-4 \xi^{2}\right)}, \quad \xi \in \Re.
\end{equation}
Using the expression \eqref{laplacian fourier}, we generate the measurements, $g$ as in \eqref{generate measurement}. We expect the error $E^{\rm rec}(g)$ to depend on the following parameters
\begin{itemize}
    \item $k$: We will expect the reconstruction error to be higher for larger values of $k$, as we have the relationship
    \begin{equation*}
        \omega(f_k) = \|(f_k)_x\|_{L^2(-1,1)}/\|f_k\|_{L^2(-1,1)}= k\pi/2.
    \end{equation*}

    \item $B$: We expect the reconstruction error to be low, if the truncation bandwidth $B$ is high.

    \item $\delta$: We expect the reconstruction error to be high, if the noise level $\delta$ is high. In particular, we expect that even for a very large truncation bandwidth $B$, the reconstruction error will be around $\delta$.
    \item $M$: The number of samples. In this example, we set the sampling rate, $r$, to be constant, that is we set $M = \lceil r B \rceil$.
\end{itemize}

\begin{figure}[h]
\centering     %%% not \center
\subfigure[$k=4$]{\label{fig:a}\includegraphics[width=60mm]{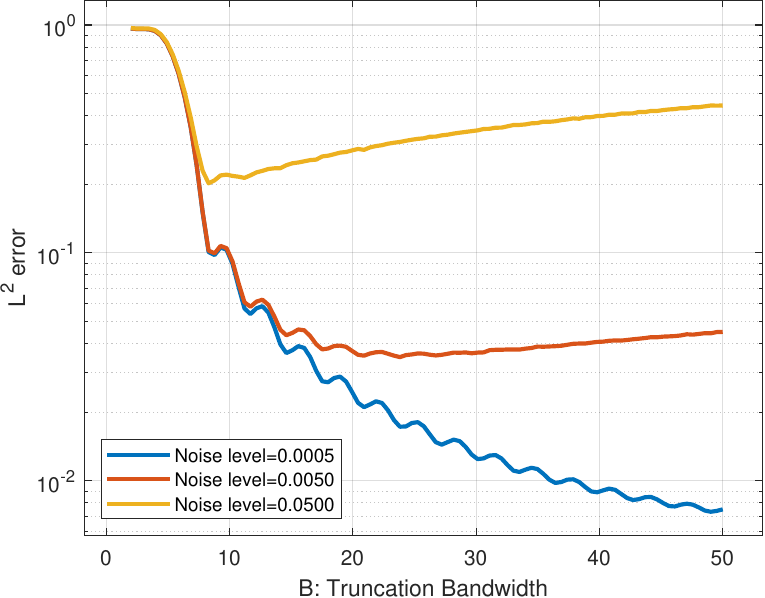}}
\subfigure[$k=15$]{\label{fig:b}\includegraphics[width=60mm]{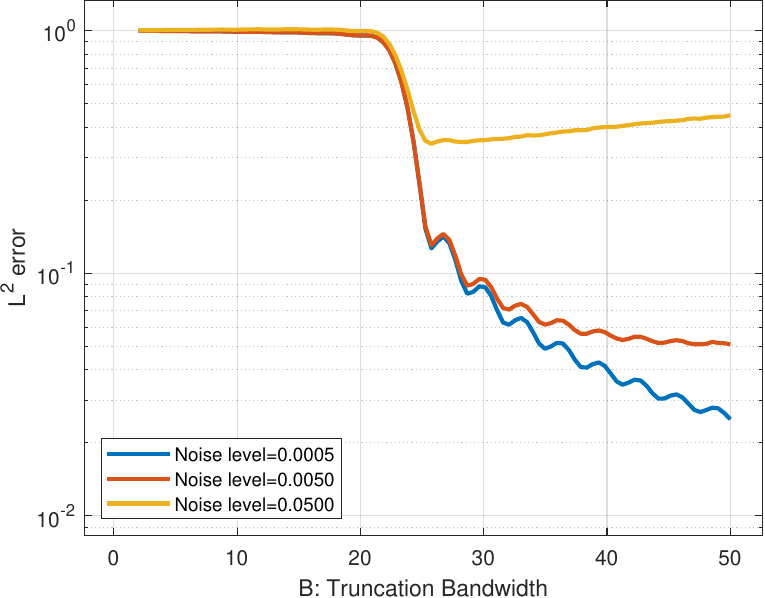}}
\caption{Reconstruction error using measurements taking on a grid $\Xi_{B,h}$. For each value of the noise level $\delta$ and for each value of $B$, the reconstruction error is computed from 1000 simulations. The plots shows the average of these simulations.}
\label{fig:error_vs_bw}
\end{figure}

From the experiments, we observe in Figure \ref{fig:error_vs_bw} that for a low bandwidth $B$, the reconstruction error is high. The error decreases dramatically at a "critical" bandwidth around $B=\omega(f_k) = \frac{\pi}{2}k$. This critical bandwidth does not seem to depend on the noise level, so in the following, we will set the noise level to zero. Notice also, that after the critical bandwidth, the reconstruction error seems to increase. This is especially apparent for $k=4$ and the highest noise level.\\

We define the critical bandwidth by
\begin{equation*}
    B_0(e_{\rm cut},k) = \min \{B_0\,:\,E^{\rm rec}(F_{B_0}f_k)>e_{\rm cut}\}.
\end{equation*}

The critical bandwidth for a range of $k$ and $e_{\rm cut}$ is shown in Figure \ref{fig:critical_bandwidth}. The critical bandwidth depends on $e_{\rm cut}$ and $k$ in the following way $B_0(e_{\rm cut}, k) = Ck + C_{e_{\rm cut}}$. Using a least squares fit on the points in Figure \ref{fig:critical_bandwidth}, we find that $C = 1.58 \approx \pi/2$, and where the offset $C_{0.2}>C_{0.5}>C_{0.7}$ is a small number. This verifies Theorem \ref{large_truncation}, namely that for $B>(1+\epsilon)\omega(f)$, with $\epsilon>0$, we can obtain stable reconstructions, as $\|F_Bf\|_{L^2(-B,B)}\geq C_\epsilon \|f\|_{L^2(-1,1)}$, where $C_\epsilon>0$. For $B<\omega(f)$ we also have a stability estimate as seen in Theorem \ref{small_truncation}, but here the stability constant increases very rapidly as $\omega(f)-B$ increases. \\

In the 1963 paper by H. J. Landau  \cite{landau}, it was found that the $\lfloor B\rfloor - 1$'th singular value of $F_B$, $\sigma_{\lfloor B\rfloor -1}$ satisfies $\sigma_{\lfloor B\rfloor -1}>0.4$, whereas $\sigma_{\lfloor B\rfloor + 1}<0.6$. In general, the first $\lfloor B\rfloor $ singular values are close to 1, whereas $\sigma_{\lfloor B\rfloor +k}$ decays at least exponentially with $k$.

\begin{figure}[h]
\centering     %%% not \center
\subfigure[Noise $\delta = 0$]{\label{fig:a}\includegraphics[width=60mm]{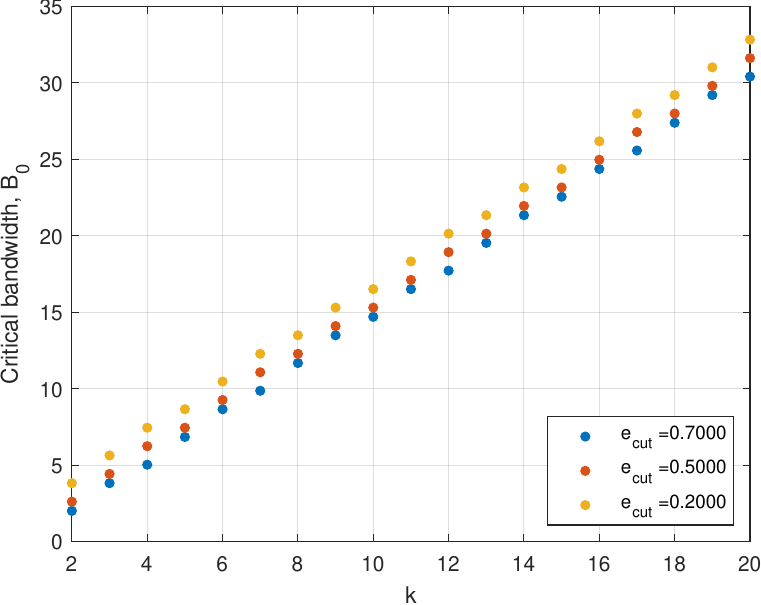}}
\subfigure[Noise $\delta = 0.05$]{\label{fig:b}\includegraphics[width=60mm]{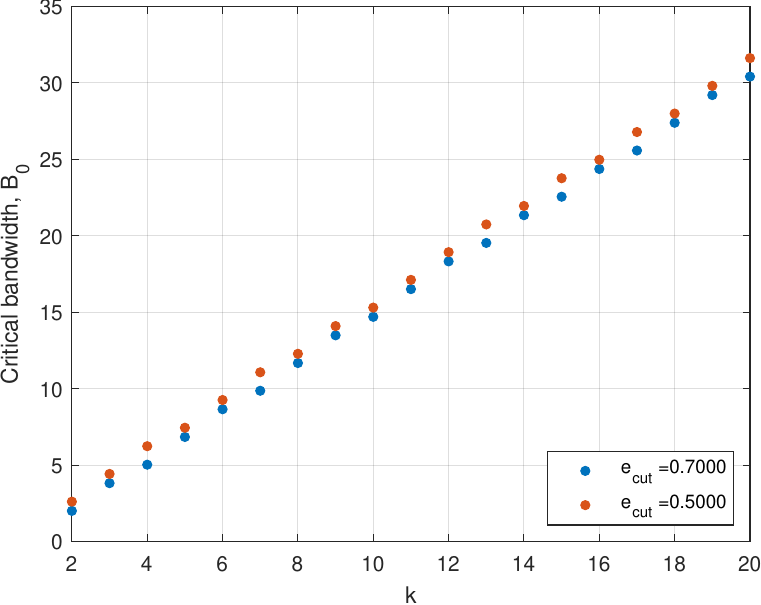}}
\caption{Critical bandwidth, $B_0$ versus the index $k$ for the eigenfunctions $f_k$. The plot shows different cutoff errors $e_{\rm cut}$. With $e_{\rm cut } = 0.2$ and $\delta =0.05$, the critical bandwidth is $B_0 = \infty$.}
\label{fig:critical_bandwidth}
\end{figure}

\subsection{Error analysis for the numerical example}

Recall that the reconstruction error of the noisy measurement $g = F_Bf +  \epsilon \in \Ce^{M}$ is given by
\begin{equation*}
    E^{\rm rec}(g) = \l[ \frac{2}{M} \sum_{m=1}^M  |f_m^{\rm rec} - f(x_m)|^2 \r]^{1/2}.
\end{equation*}
Here the spacing in-between samples, $h$, and the total number of samples satisfy $M =  2 h B\in \Ne$. The sampling rate is refined as $r_h = 1/h$. The error $E^{\rm rec}$ can be decomposed into several parts:

\begin{equation*}
    E_{B} = \|f - F^{-1}F_Bf\|_{L^2(-1,1)}.
\end{equation*}
This is the truncation error. Theorem \ref{large_truncation} and \ref{small_truncation} give upper bounds on this error. Then there is the sampling error
\begin{equation*}
    E_{h} = \l\| F^{-1}F_Bf -  \Tilde{F}^{-1}F_Bf \r\|_{L^2(-1,1)}
\end{equation*}
with
\begin{equation*}
    \widetilde{F}(g) = \sum_{m=0}^{M-1} g\l(\frac{2m B}{M} - B\r) e^{2\pi i x(2mB/M - B)},
\end{equation*}
which for a fixed $x$ is the standard left-point numerical quadrature rule for computing the integral $\tilde{F}^{-1} F_Bf(x)$.

For an arbitrary function $f\in H^1(-1,1)$, define
\begin{equation*}
    E_I(f) = \l| \|f\|_{L^2(-1,1)} - \l(\sum_{m=1}^M |f(x_m)|^2\r)^{1/2} \r|.
\end{equation*}
This error depends on the number of samples and the regularity of $f$.\\

For simplicity, let $I_h(f^{\rm rec})$ be an interpolation of the reconstruction samples: $I_h: \Ce^n \to L^2(-1,1)$. For now it does not matter what this interpolation is, we can consider 1st order spline, so that $I_hf^{\rm rec}$ is piece-wise affine.

Now let's consider how we might decompose the reconstruction error:
\begin{equation*}
\begin{split}
    E(g) &= E_I(f-I_hf^{\rm rec}) +  \|f-I_h f^{\rm rec}\|_{L^2(-1,1)}
    \\&\leq E_I(f-I_hf^{\rm rec}) + E_B + E_h + \|\widetilde{F}^{-1} (g) - \widetilde{F}^{-1}(\varepsilon) - I_hf^{\rm rec}\|
    \end{split}
\end{equation*}

Now choose the interpolation scheme $I_h$ such that
\begin{equation*}
    I_h f^{\rm rec} = I_h (\{(\widetilde{F}^{-1} g)(x_m)\}_{m=1}^{M}) = \widetilde{F}^{-1} g.
\end{equation*}

We then obtain
\begin{equation*}
    E(g)\leq E_I(f-\widetilde{F}^{-1}g) + E_B + E_h + E_\varepsilon,
\end{equation*}

where $E_\varepsilon = \| \widetilde{F}^{-1}(\varepsilon) \|_{L^2(-1,1)}$.

We can consider what happens in the limit $h=0$. Then we have $0=E_I(f-\widetilde{F}^{-1}g)=E_h$. Now suppose $\varepsilon \in L^\infty (-B,B)$, then we have
\begin{equation*}
\begin{split}
    E(g) &\leq  E_B + E_\varepsilon
    \\&= E_{B} + \l(\int_{-1}^1 \l|\int_{-B}^B \varepsilon(\xi) e^{-i x\xi}\d   \xi \r|^2 \d x\r)^{1/2}
    \\&\leq E_B  + \sqrt{8}B\|\varepsilon \|_{L^\infty(-B,B)}
    \\&\leq  E_B  + 2\sqrt{B} \|\varepsilon \|_{L^2(-B,B)}
    \\&\leq  E_B  + \sqrt{2}\|\varepsilon \|_{L^1(-B,B)}
    \end{split}
\end{equation*}

In the discrete case $h>0$, we indeed also see that the error $E(g)=\mathcal{O}(\sqrt{B})$ as $B\to \infty$, see Figure \ref{fig:noise-error}.

\begin{figure}[h]
    \centering
    \includegraphics[width=7cm]{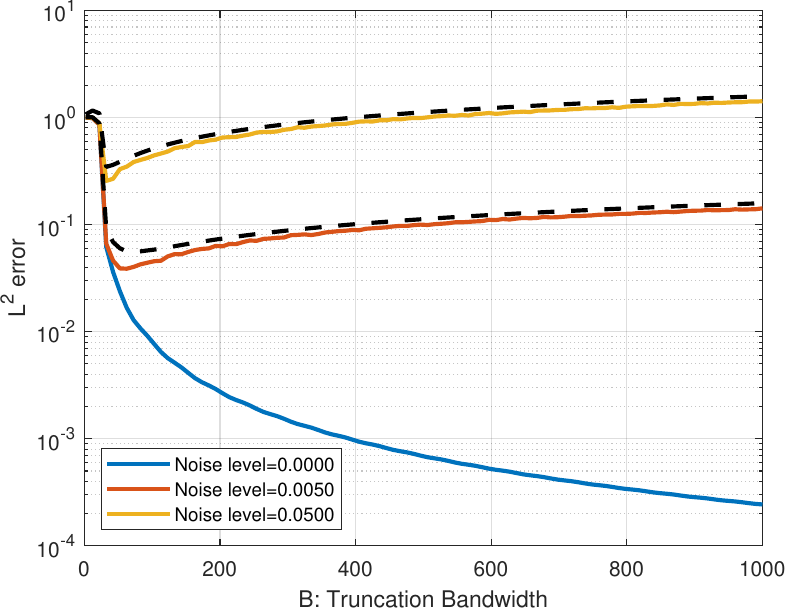}
    \caption{Reconstruction error using noisy measurements (see \eqref{generate measurement}) of $\widehat{f}_{15}$ taking on a grid $\Xi_{B,h}$. The black dottet lines show the sum of the error in the noise free case (blue line) and $\delta \sqrt{B}$.}
    \label{fig:noise-error}
\end{figure}

\appendix

\bibliographystyle{abbrv}
\bibliography{stability_truncated_fourier_transform} 
%\printbibliography
\end{document}